\numberwithin{equation}{section}
\titleformat*{\section}{\large\bfseries}
\newtheorem{thm}{Theorem}[section]
\newtheorem{lem}[thm]{Lemma}
\newtheorem{prop}[thm]{Proposition}
\newtheorem{rem}[thm]{Remark}
\newtheorem{hyp}[thm]{Hypothesis}
\begin{document}
\title{Smoothness of integrated density of states of the Anderson model on Bethe lattice in high disorder}
\author{Dhriti Ranjan Dolai\\
Indian Institute of Technology Dharwad, \\
Dharwad  - 580011, India.\\
email: dhriti@iitdh.ac.in\\~\\
M. Krishna\\
Ashoka University, Plot 2, Rajiv Gandhi Education City,\\
Rai, Haryana 131029, India\\
email: krishna.maddaly@ashoka.edu.in}
\maketitle
\noindent {\bf Abstract:} In this work we consider the Anderson model on Bethe lattice and prove that the integrated density of states (IDS) is as smooth as the single site distribution (SSD), in high disorder.\\~\\
 {\bf MSC (2020):} 81Q10, 47B80, 35J10.\\
{\bf Keywords:} Anderson Model, random Schr\"{o}dinger operators, integrated density of states, Bethe lattice.
\section{Introduction}
The Bethe lattice $\mathbb{B}$ is an infinite connected graph with no closed loops and a fixed degree $K+1$ (number of nearest neighbours) at each vertex. The degree is called the coordination number and the connectivity $K$ is one less the coordination number. Distance between any two vertex $n_1$ and $n_2$ is denoted by $d(n_1,n_2)$ and it is the length of shortest path connecting $n_1$ and $n_2$. We denote $\ell^2(\mathbb{B})$ to be the Hilbert space 
$\bigg\{u:\mathbb{B}\to \mathbb{C}; \displaystyle \sum_{n\in\mathbb{B}}|u(n)|^2<\infty \bigg\}$. 
The Anderson Model on Bethe lattice is a random Schr\"{o}dinger operator $H^\omega$ on the Hilbert space $\ell^2(\mathbb{B})$ and it is given by 
\begin{align}
\label{model}
H^\omega &=\Delta+\lambda V^\omega,~\lambda>0,~\omega\in\Omega,\\
(\Delta u)(n) &=\sum_{k:d(k,n)=1}u(k),~ u=\{u(n)\}_{n\in\mathbb{B}}\in\ell^2(\mathbb{B}),\nonumber\\
(V^\omega u)(n) &=\omega_nu(n) \nonumber,
\end{align}
where $\{\omega_n\}_{n\in\mathbb{B}}$ are i.i.d real random variables with common distribution $\mu$ and it has a compact support. Consider the probability space $\big(\mathbb{R}^{\mathbb{B}}, \mathcal{B}_{\mathbb{R}^{\mathbb{B}}}, \mathbb{P} \big)$, where $\mathbb{P}=\underset{n\in\mathbb{B}}{\otimes}\mu$ is constructed via the Kolmogorov theorem. We refer to this probability space as $\big(\Omega, \mathcal{B}_\Omega, \mathbb{P}\big)$ and denote $\omega=(\omega_n)_{n\in\mathbb{B}}\in \Omega$.
The operator $\Delta$ is known as the graph Laplacian with diagonal term removed and the potential $V^\omega$ is the multiplication operator on $\ell^2(\mathbb{B})$ by the sequence $\{\omega_n\}_{n\in\mathbb{B}^d}$. We
note that the operators $\{H^\omega\}_{\omega\in\Omega}$ are bounded self-adjoint. It follows from the ergodicity (see \cite[appendix]{AK} and \cite{CL, Pa}) that the spectrum of the random operators $H^\omega$ is given by $\sigma(H^\omega)=[-2\sqrt{K}, 2\sqrt{K}]+ \lambda ~supp~\mu$, a.e $\omega$.\\~\\
First, we fix a vertex (we can choose any one) as the origin and denote it by $0$. Now we define the finite sub-graph $\Lambda_L$ and its boundary $\partial \Lambda_L$ as
$$\Lambda_L=\big\{n\in\mathbb{B}: d(n,0)\leq L \big\}~\&~\partial\Lambda_L=\big\{(n,m)\in\Lambda_L\times\big(\mathbb{B}\setminus \Lambda_L\big):d(n,m)=1 \big\}.$$
Let $\chi_{_L}$  and $P_n$ denote the orthogonal projection onto the subspace $\ell^2(\Lambda_L)$ and $\ell^2(\{\delta_n\})$, respectively. Here $\{\delta_n\}_{n\in\mathbb{B}}$ is the standard basis for the Hilbert space $\ell^2(\mathbb{B})$ . Define the finite matrix $H^\omega_L$ of size $|\Lambda_L|$ as
$$H^\omega_L=\Delta_L+V^\omega_L,~\Delta_L=\chi_{_L}\Delta\chi_{_L},~V^\omega_L=\displaystyle\sum_{n\in\Lambda_L}\omega_n P_n. $$
We consider the   integrated density of states (IDS) $\mathcal{N}(\cdot)$, as it is given in \cite[equation (1.4)]{AK}, namely
\begin{align}
\label{ids}
\mathcal{N}(E):=\mathbb{E}\big(\langle \delta_0, E_{H^\omega}(-\infty, E] \delta_0\big),
\end{align}
where $E_A(\cdot)$ denote the spectral measure of a self-adjoint operator $A$.
The probability measure $\nu(\cdot):=\mathbb{E}\big(\langle \delta_0, E_{H^\omega}(\cdot) \delta_0\big)$ is known as the density of states measure (DOSm). In the Hypothesis \ref{Hyp} (below), we assume the absolute continuity of $\mu$, therefore the Wegner estimate (see \cite{JFA}) will give the absolute continuity of the DOSm and in that case we have $d\nu(x)=\rho(x)dx$ and $\mathcal{N}'(x)=\rho(x)$.\\~\\
 In this paper we are interested in the degree of smoothness of $\rho$,  for which we make the following assumptions on $H^\omega$ (\ref{model}) :
\begin{hyp}
\label{Hyp}
~
\begin{enumerate}
 \item [(a)] The single site distribution (SSD) $\mu$ is absolutely continuous with respect to the Lebesgue measure on $\mathbb{R}$, i.e $d\mu(x)=g(x)dx$ and we also assume the function $g\in C^m_c(\mathbb{R})$, set of all continuously differentiable function up to the order $m$, with compact support.
 \item [(b)] Let the compact interval $J\subset \mathbb{R}$ be in the region of  exponential localization. Stated precisely, there exists a constant $C_s>0$ such that
 \begin{equation}
 \label{expo}
\sup_{\Re(z)\in J,~\Im(z)>0}\mathbb{E}\bigg(\bigg|\big\langle \delta_{n_1},\big(H^\omega_L-z\big)^{-1} \delta_{n_2}\big\rangle\bigg|^s\bigg)\leq C_se^{-\xi_s d(n_1,n_2)}, ~C_s>0,
 \end{equation}
 for some $\xi_s>0,~0<s<1$, for any $n_1,n_2\in\mathbb{B}$ and $\forall~L>0$. The decay rate $\xi_s$ satisfies the condition $\xi_s>2\ln(K+1)\big(3+p\big)$ for some $1\leq p\leq m$. We also assume that the constants $C_s,~\xi_s$ do not change if we replace the distribution $g$ with its derivatives at finitely many vertices.
\end{enumerate}
\end{hyp}
\begin{rem}
Exponential decay in (\ref{expo}) can be obtained with established techniques developed to prove the localization of Anderson model via fractional moments methods of the Green's function.  We refer to Aizenman \cite{A}, Aizenman-Molchanov \cite{AM} (see also Aizenman et al \cite{ASFH}) for more details. With the same method one can make $\xi_s$ (the decay rate in the exponential (\ref{expo})) arbitrary large by taking the disorder parameter $\lambda$ large enough, in the definition of $H^\omega$ (\ref{model}). Going through the proofs given in \cite{A, AM} and keep track of the constants, one can also realize that the assumption about changing the distribution at finitely many sites is also valid.
\end{rem}
\noindent Now we are ready to state our main result:
 \begin{thm}
 \label{main}
 Consider the operator $H^\omega$ given in (\ref{model}) satisfying the hypothesis \ref{Hyp}, then  the integrated density of states $\mathcal{N}(\cdot)$ is continuously differentiable up to the order $p$ in the interval $J$. In other words, 
 \begin{equation}
 \label{diff}
 \mathcal{N}(E)=\mathbb{E}\big(\langle \delta_0, E_{H^\omega}(-\infty, E] \delta_0\big)\in C^p(J),~
 \text{for some}~1\leq p\leq m.
 \end{equation}
 \end{thm}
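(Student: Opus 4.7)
The plan is to establish $\mathcal{N}\in C^p(J)$ by showing existence and continuity of $\mathcal{N}^{(k)}(E)$ on $J$ for each $k=1,\ldots,p$. From the representation
$$
\mathcal{N}'(E)=\rho(E)=\tfrac{1}{\pi}\lim_{\epsilon\downarrow 0}\mathrm{Im}\,\mathbb{E}\big[G^\omega(0,0,E+i\epsilon)\big],\qquad G^\omega(a,b,z):=\langle\delta_a,(H^\omega-z)^{-1}\delta_b\rangle,
$$
combined with
$$
\partial_E^{\,k-1}G^\omega(0,0,z)=(k-1)!\sum_{n_1,\ldots,n_{k-1}\in\mathbb{B}}G^\omega(0,n_1,z)G^\omega(n_1,n_2,z)\cdots G^\omega(n_{k-1},0,z),
$$
the task reduces to producing a uniform-in-$\epsilon$ bound for $\mathbb{E}[\partial_E^{\,k-1}G^\omega(0,0,E+i\epsilon)]$ on $J$; dominated convergence then delivers $\rho^{(k-1)}\in C(J)$, i.e. $\mathcal{N}^{(k)}\in C(J)$. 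All estimates are first carried out in the finite volume $\Lambda_L$ where Hypothesis~\ref{Hyp}(b) applies, and the infinite-volume conclusion is obtained by taking $L\to\infty$ with bounds uniform in $L$.

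The mechanism is to trade resolvent factors for derivatives of the single-site density $g$. Since $\partial_{\omega_n}G^\omega(a,b,z)=-\lambda\, G^\omega(a,n,z)G^\omega(n,b,z)$, any pair $G^\omega(\cdot,n,z)G^\omega(n,\cdot,z)$ meeting at an intermediate site $n$ can be written as $-\lambda^{-1}\partial_{\omega_n}$ of the contracted resolvent factor. Integrating by parts against $\prod_m g(\omega_m)$ replaces $g(\omega_n)$ by $-g'(\omega_n)$, while the Leibniz rule generates additional cross-terms in which $\partial_{\omega_n}$ hits the remaining resolvent factors. Iterating this procedure on the multiple sum for $\partial_E^{\,k-1}G^\omega(0,0,z)$ reorganizes the expression into a finite sum of terms of the form $\mathbb{E}^{(\alpha)}[(\text{short product of Green's functions})]$, where the superscript $\alpha=\{(m_\ell,j_\ell)\}$ records that at each site $m_\ell$ the density has been replaced by $g^{(j_\ell)}$ with $1\leq j_\ell\leq p$ and at most $p$ such modified sites.

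To estimate each summand, I would invoke Hypothesis~\ref{Hyp}(b): the fractional-moment bound $\mathbb{E}^{(\alpha)}|G^\omega(n_1,n_2,z)|^s\leq C_s\,e^{-\xi_s d(n_1,n_2)}$ is assumed to persist when $g$ is replaced by its derivatives at finitely many sites. Combined with a rank-one Wegner-type reduction at a residual free vertex — extracting a factor $\pi\|g^{(j_\ell)}\|_\infty$ at each modified site — each summand is dominated by a product of exponentials $e^{-\xi_s d(\cdot,\cdot)}$. The remaining multiple sum over intermediate sites $n_1,\ldots,n_{k-1}\in\mathbb{B}$ is controlled by the Bethe-lattice sphere growth $|\{n:d(0,n)=r\}|\sim(K+1)^r$; requiring this geometric series to converge — with a factor $2$ arising from working with $|G|^s$ rather than $|G|$ (so the effective decay is $\xi_s/2$), and a factor $3+p$ accounting for the number of residual Green's function factors left after IBP for $k\leq p$ — gives the precise condition $\xi_s>2\ln(K+1)(3+p)$ imposed in Hypothesis~\ref{Hyp}(b).

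The main obstacle will be organizing the iterative integration by parts. Each step applied to a pair $G(\cdot,n)G(n,\cdot)$ activates, via the Leibniz rule, the $\omega_n$-dependence of every other resolvent factor in the product and produces a proliferating tree of cross-terms. One must choose the order of IBP carefully (e.g. processing the chain $n_1,n_2,\ldots$ in a fixed direction) and enumerate the resulting combinatorial tree so that its cardinality is at most polynomial in $p$, while no summand retains more than $O(p)$ residual Green's functions — this is what fixes the combinatorial factor $3+p$ in the decay budget. Once this bookkeeping is in place, the fractional-moment bound together with the Bethe-lattice volume estimate closes the uniform-in-$\epsilon$ bound, and continuity of $E\mapsto\rho^{(k-1)}(E)$ on $J$ follows from standard dominated-convergence arguments as $\epsilon\downarrow 0$.
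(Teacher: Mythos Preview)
Your proposal takes a route different from the paper's, and as sketched it contains a genuine gap specific to the Bethe lattice.

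The paper does \emph{not} expand $\partial_z^{\ell}\mathbb{E}[G^\omega(0,0,z)]$ as a multiple sum over intermediate sites. Instead it telescopes in the volume,
\[
\mathbb{E}\big[G^\omega(z;0,0)\big]=\mathbb{E}\big[G^\omega_M(z;0,0)\big]+\sum_{L\ge M}\mathbb{E}\big[G^\omega_{L+1}(z;0,0)-G^\omega_L(z;0,0)\big],
\]
and differentiates term by term. The $z$-derivatives are transferred to $E$-derivatives of the densities via the shift $\tilde\omega_n=\omega_n-E$ (a multinomial over sites, not your resolvent-chain expansion). Crucially, the exponential smallness of each summand comes from the \emph{resolvent identity} for $G_{L+1}-G_L$, which forces a boundary Green's function $G(0,k)$ with $d(0,k)=L+1$; this decay survives after differentiation and beats the combinatorial factor $|\Lambda_{L+1}|^{\ell+1}$, yielding the threshold $\xi_s>2\ln(K+1)(3+p)$.

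Your IBP scheme, by contrast, loses exactly the decay it needs. When you contract $G(\cdot,n_i)G(n_i,\cdot)$ into a single resolvent and move the derivative onto $g(\omega_{n_i})$, the site $n_i$ disappears from every Green's function in that ``main'' term. The fractional-moment bound on the shortened product then carries \emph{no} decay in $d(0,n_i)$, and the outstanding sum $\sum_{n_i\in\Lambda_L}$ contributes a factor $|\Lambda_L|\sim (K+1)^L$ with nothing to compensate it. On $\mathbb{Z}^d$ this is harmless (polynomial volume), but on the Bethe lattice it is fatal; your assertion that ``each summand is dominated by a product of exponentials $e^{-\xi_s d(\cdot,\cdot)}$'' is precisely what fails for those terms. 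The cross-terms from the Leibniz rule do retain $n_i$ in some Green's function, but you cannot avoid the main terms, and no ordering of the IBP steps changes this. Your reading of the constant $3+p$ as ``number of residual Green's function factors'' is also off: in the paper it arises from the volume bound $|\Lambda_{L+1}|^{\ell+1}\le C(K+1)^{(L+2)(\ell+1)}$ multiplied against the boundary decay $e^{-\xi_s(L+1)/2}$, not from a count of resolvent factors.

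If you want to salvage the argument, you would need an additional mechanism that restores decay in the contracted sites---this is exactly what the paper's telescoping supplies, since the difference $G_{L+1}-G_L$ already localizes to distance $L$ from the origin before any differentiation is performed.
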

 \noindent Several results are known about the smoothness of the integrated density of states (IDS) of the Anderson model on $\ell^2(\mathbb{Z}^d)$, under different types of assumptions on the single site distribution (SSD). We refer to Constantinescu et al. \cite{CFS}, Bovier et al \cite{BCKP}, Kaminaga et al \cite{KKS}, Bellissard-Hislop \cite{BH}, Carmona-Lacroix \cite[Corollary VI.3.2]{CL} for more details.
It is also possible to calculate the exact expression of the IDS for different models, when the SSD is Cauchy, we refer to Lloyd \cite{L}, Carmona-Lacroix \cite[problem VI.5.5]{CL} and Kirsch-Krishna \cite{KK} for more details.  \\
 For the one-dimensional random Schr\"{o}dinger operators, much more rich literature can be found in Companino-Klein \cite{CK}, March-Sznitman \cite{MS},  Simon-Taylor \cite{ST},  Speis \cite{S}, Klein-Speis \cite{KS, KS1}, Klein et al \cite{KLS} and Glaffig \cite{G}. Brodie, in his doctoral thesis \cite{Ben} investigated the smoothness of the density of states (DOS) for the random band matrix.\\
 The higher order differentiability of the integrated density of states (IDS) for continuum model on $\L^2(\mathbb{R}^d)$ was obtained by  Dolai et al \cite{DKM}. Although, their methods can also be applied to a large class of discrete model, but as $|\Lambda_L|$, the volume of the finite sub-graph (of Bethe lattice) increases exponentially as $L$ gets larger, the differentiability of IDS for Bethe lattice model is not immediate from \cite{DKM}.\\
 In \cite{AK}, Acosta-Klein showed the analyticity of the IDS for the Bethe lattice model when the single site distribution (SSD) is Cauchy or close to Cauchy (in some function space). The result in \cite{AK} is also valid in the region of absolutely continuous spectrum.\\
 In this work, we prove that for the Bethe lattice model, the integrated density of states (IDS) and the single site distribution (SSD) have the same degree of smoothness, in high disorder.
 \section{Proof of the results}
 In this section we will give the proof of our main result Theorem \ref{main}. Before going to the proof of this theorem, we need some pre requisite results. First, we show that the fractional moments of the difference of two resolvents of finite volume restriction of $H^\omega$ to the sub-graphs $\Lambda_{L+1}$ and $\Lambda_L$ is decay exponentially, as $L$ increases.\\
Let $G^\omega_L(z;n_1,n_2)$ denote the Green's function of the operator $H^\omega_L$ and it is defined by $G^\omega_L(z;n_1,n_2):=\big\langle \delta_{n_1},\big(H^\omega_L-z\big)^{-1} \delta_{n_2}\big\rangle$.
\begin{lem}
\label{resol-diff}
Under the condition [(b), Hypothesis \ref{Hyp}], the expected value of the difference of the resolvents can be bounded as, for some $B_s>0$ 
\begin{equation}
\label{resolvt-est}
\sup_{\Re(z)\in J,~\Im(z)>0}\mathbb{E}\bigg(\big|G^\omega_{L+1}(z;0,0)-G^\omega_L(z;0,0)\big|^{\frac{s}{2}}\bigg)\leq B_s e^{-(L+1)\big[\frac{\xi_s}{2} -\frac{2L+3}{L+1}\ln(K+1)\big]}.
\end{equation}
\end{lem}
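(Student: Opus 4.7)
The plan is to express the diagonal Green's function difference via a resolvent identity, reducing the diagonal quantity $G^\omega_{L+1}(z;0,0)-G^\omega_L(z;0,0)$ to a sum of off-diagonal Green's function products localized on the boundary $\partial\Lambda_L$, each of which decays exponentially by Hypothesis~\ref{Hyp}(b).

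First I would introduce the decoupled operator $\tilde H^\omega_{L+1}:=H^\omega_L\oplus V^\omega|_{\Lambda_{L+1}\setminus\Lambda_L}$ on $\ell^2(\Lambda_{L+1})$, which agrees with $H^\omega_{L+1}$ off the boundary hopping
\[
\Gamma_L:=\sum_{(n,m)\in\partial\Lambda_L}\bigl(|\delta_n\rangle\langle\delta_m|+|\delta_m\rangle\langle\delta_n|\bigr)
\]
connecting the sphere $\{d(\cdot,0)=L\}$ to the sphere $\{d(\cdot,0)=L+1\}$. Because the Bethe lattice is loopless, each vertex $m$ at distance $L+1$ has a unique ancestor $n$ at distance $L$, and distance-$(L+1)$ vertices are mutually disconnected; thus $\tilde H^\omega_{L+1}$ is block diagonal, with the origin sitting in the $H^\omega_L$ block. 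The resolvent identity
\[
(H^\omega_{L+1}-z)^{-1}-(\tilde H^\omega_{L+1}-z)^{-1}=-(H^\omega_{L+1}-z)^{-1}\Gamma_L(\tilde H^\omega_{L+1}-z)^{-1},
\]
together with the fact that $\langle\delta_m,(\tilde H^\omega_{L+1}-z)^{-1}\delta_0\rangle=0$ for $m\notin\Lambda_L$ (which annihilates one of the two cross terms produced by $\Gamma_L$), gives the clean geometric series
\[
G^\omega_{L+1}(z;0,0)-G^\omega_L(z;0,0)=-\sum_{(n,m)\in\partial\Lambda_L}G^\omega_{L+1}(z;0,m)\,G^\omega_L(z;n,0).
\]

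Next, since $s/2<1$, I would apply subadditivity $|a+b|^{s/2}\le|a|^{s/2}+|b|^{s/2}$ termwise, take expectation, and then decouple the two factors by Cauchy--Schwarz:
\[
\mathbb{E}\bigl[|G^\omega_{L+1}(z;0,m)|^{s/2}|G^\omega_L(z;n,0)|^{s/2}\bigr]\le\bigl(\mathbb{E}|G^\omega_{L+1}(z;0,m)|^s\bigr)^{1/2}\bigl(\mathbb{E}|G^\omega_L(z;n,0)|^s\bigr)^{1/2}.
\]
Hypothesis~\ref{Hyp}(b) now applies to each factor with $d(0,m)=L+1$ and $d(0,n)=L$, yielding a bound $C_s e^{-\xi_s(2L+1)/2}$, uniform in the pair $(n,m)$ and in $\Re z\in J$, $\Im z>0$. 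Summing over the $(K+1)K^L\le(K+1)^{L+1}$ boundary pairs gives
\[
\mathbb{E}\bigl[|G^\omega_{L+1}(z;0,0)-G^\omega_L(z;0,0)|^{s/2}\bigr]\le C_s(K+1)^{L+1}e^{-\xi_s(2L+1)/2},
\]
which, after the identity $-(L+1)[\xi_s/2-(2L+3)\ln(K+1)/(L+1)]=-\xi_s(L+1)/2+(2L+3)\ln(K+1)$, is dominated by the claimed bound with $B_s$ absorbing $C_s$ together with the harmless slack factor $(K+1)^{-(L+2)}e^{-\xi_s L/2}\le 1$.

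I do not anticipate a serious conceptual obstacle; the main bookkeeping points are (i) using the cycle-free structure of $\mathbb{B}$ to ensure that $\Gamma_L$ is a clean rank-$2(K+1)K^L$ perturbation coupling each new boundary vertex to a single ancestor, and (ii) verifying that the block decomposition of $\tilde H^\omega_{L+1}$ reproduces $G^\omega_L(z;0,0)$ at the origin so that the only surviving contribution is the boundary sum above. The analytic input is then just fractional-moment subadditivity, Cauchy--Schwarz, and the exponential decay hypothesis.
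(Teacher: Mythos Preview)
Your proof is correct and follows essentially the same route as the paper: a boundary-decoupling resolvent identity producing the sum $G^\omega_{L+1}(z;0,0)-G^\omega_L(z;0,0)=-\sum_{(n,m)\in\partial\Lambda_L}G^\omega_{L+1}(z;0,m)\,G^\omega_L(z;n,0)$, then subadditivity of $|\cdot|^{s/2}$, Cauchy--Schwarz, and Hypothesis~\ref{Hyp}(b). Your boundary counting and use of both decay factors are in fact sharper than the paper's (which bounds the number of terms crudely by $|\Lambda_L|\cdot|\Lambda_{L+1}\setminus\Lambda_L|\lesssim(K+1)^{2L+3}$ and keeps only the single factor $e^{-\xi_s(L+1)/2}$), but the argument is the same.
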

\begin{proof}
Let's begin with the resolvent identity
\begin{align}
\label{resolv}
& (H^\omega_{L+1}-z)^{-1}- (H^\omega_L-z)^{-1}\nonumber\\
& \qquad=(H^\omega_{L+1}-z)^{-1}\bigg[(H^\omega_L-z)-(H^\omega_{L+1}-z) \bigg](H^\omega_L-z)^{-1}\nonumber\\
& \qquad= (H^\omega_{L+1}-z)^{-1}\bigg[\chi_{_L}\Delta\chi_{_L} -\chi_{_{L+1}}\Delta\chi_{_{L+1}}-\sum_{n\in \Lambda_{L+1}\setminus \Lambda_L}\omega_nP_n\bigg](H^\omega_L-z)^{-1}\nonumber\\
& \qquad = (H^\omega_{L+1}-z)^{-1}\bigg( \sum_{n\in \Lambda_{L+1}\setminus \Lambda_L}P_n\bigg)\Delta \chi_{_L}(H^\omega_L-z)^{-1}.
\end{align}
In the last line, we wrote $\chi_{_{L+1}}=\chi_{_L}+\sum_{n\in \Lambda_{L+1}\setminus \Lambda_L} P_n$ and also we have used the fact that the operator $\bigg(\displaystyle \sum_{n\in \Lambda_{L+1}\setminus \Lambda_L}P_n\bigg)(H^\omega_L-z)^{-1}\equiv 0$.\\
Now using the definition of the Laplacian $\Delta$ in (\ref{resolv}), we write
\begin{align}
\label{resolv-1}
& G^\omega_{L+1}(z;0,0)-G^\omega_L(z;0,0)\nonumber\\
& \qquad =-\sum_{m\in\Lambda_L}\sum_{\substack{k:d(k,m)=1\\ k\in\Lambda_{L+1}\setminus\Lambda_L} }  
\big\langle \delta_m, (H^\omega_L-z)^{-1}\delta_0 \big\rangle \big\langle \delta_0, (H^\omega_{L+1}-z)^{-1}\delta_k \big\rangle.
\end{align}
A bound for the number of vertex in $\Lambda_L$ can be given by $|\Lambda_L|\leq D (K+1)^{L+1}$, similarly 
$\big| \Lambda_{L+1}\setminus\Lambda_L\big|\leq D (K+1)^{L+2}$, $D$ is independent of $L$.\\
Now take the fractional power of the both side of the equation (\ref{resolv-1}),
\begin{align}
\label{resolv-2}
& \big|G^\omega_{L+1}(z;0,0)-G^\omega_L(z;0,0)\big|^{\frac{s}{2}}\nonumber\\
& \qquad \leq\sum_{m\in\Lambda_L}\sum_{\substack{k:d(k,m)=1\\ k\in\Lambda_{L+1}\setminus\Lambda_L} }  
\big|\big\langle \delta_m, (H^\omega_L-z)^{-1}\delta_0 \big\rangle\big|^{\frac{s}{2}} \big|\big\langle \delta_0, (H^\omega_{L+1}-z)^{-1}\delta_k \big\rangle\big|^{\frac{s}{2}}.
\end{align}
After taking the expectation both side of the above inequality (\ref{resolv-2}), we use Cauchy-Schwartz inequality to write
\begin{align}
\label{resolv-3}
& \mathbb{E}\bigg(\big|G^\omega_{L+1}(z;0,0)-G^\omega_L(z;0,0)\big|^{\frac{s}{2}}\bigg)\nonumber\\
&\qquad \leq\sum_{\substack{m\in\Lambda_L\\ k\in\Lambda_{L+1}\setminus\Lambda_L\\ k:d(k,m)=1 }}  \mathbb{E}\bigg(
\big|\big\langle \delta_m, (H^\omega_L-z)^{-1}\delta_0 \big\rangle\big|^{\frac{s}{2}} \big|\big\langle \delta_0, (H^\omega_{L+1}-z)^{-1}\delta_k \big\rangle\big|^{\frac{s}{2}}\bigg)\nonumber\\
& \qquad\leq\sum_{\substack{m\in\Lambda_L\\ k\in\Lambda_{L+1}\setminus\Lambda_L\\ k:d(k,m)=1 }} 
\mathbb{E}\bigg( \big|\big\langle \delta_m, (H^\omega_L-z)^{-1}\delta_0 \big\rangle\big|^s\bigg)^{\frac{1}{2}} 
\mathbb{E}\bigg( \big|\big\langle \delta_0, (H^\omega_{L+1}-z)^{-1}\delta_k \big\rangle\big|^s\bigg)^{\frac{1}{2}} \nonumber\\
& \qquad\leq D^2 C_s^2(K+1)^{2L+3}e^{-\frac{\xi_s}{2}(L+1)}.
\end{align}
In the last line we have used the volume estimation of $\Lambda_L,~\Lambda_{L+1}\setminus\Lambda_L$ and the fractional moments given in (\ref{expo}). Now (\ref{resolvt-est}) is immediate.
\end{proof}
\noindent We can always conclude the differentiability of the density of an absolutely continuous measure from the derivatives of its Borel transform.
\begin{lem}
\label{equi}
Let $d\tilde{\mu}(x)=f(x)dx$ be an absolutely continuous measure on the real line and $I\subset \mathbb{R}$ an interval. Denote $F(z):=\int_{\mathbb{R}}\frac{1}{x-z}d\tilde{\mu}(x)$ to be the Borel transformation of $\tilde{\mu}(\cdot)$. Then for any $m\in\mathbb{N}$
\begin{equation}
\label{diff-borel}
\underset{x\in I}{ess~sup}~\bigg|\frac{d^m}{dx^m}f(x)\bigg|<\infty~\text{whenever}~\sup_{\Re(z)\in I,~\Im(z)>0}\bigg|\Im{\bigg(\frac{d^m}{dz^m}F(z) \bigg)}\bigg|<\infty.
\end{equation}
\end{lem}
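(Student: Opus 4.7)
The plan is to reduce the lemma to a duality argument between $L^1$ and $L^\infty$ via the Poisson representation of the Borel transform. Since $F$ is holomorphic on the upper half-plane, the complex $m$-th derivative coincides with the $x$-derivative $\partial_x^m F(x+iy)$, so $\Im F^{(m)}(x+iy) = \partial_x^m\,\Im F(x+iy)$; this is the identification that lets the hypothesis on $F^{(m)}$ control $m$ derivatives of a genuine convolution with $f$.

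First I would write $\Im F(x+iy) = \pi\,(P_y * f)(x)$, where $P_y(s) = \pi^{-1}\,y/(s^2+y^2)$ is the Poisson kernel on the upper half-plane. Since $\{P_y\}_{y>0}$ is an approximate identity and $f \in L^1(\mathbb{R})$, we have $\pi^{-1}\,\Im F(\,\cdot\, + iy) \to f$ in $L^1(\mathbb{R})$ as $y \to 0^+$.

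Next, for any test function $\phi \in C_c^\infty(I)$, $m$-fold integration by parts yields
\[
\int_I \phi(x)\,\Im F^{(m)}(x+iy)\, dx \;=\; (-1)^m \int_I \phi^{(m)}(x)\,\Im F(x+iy)\, dx.
\]
Writing $C$ for the finite supremum appearing in the hypothesis, the left-hand side is bounded by $C\,\|\phi\|_{L^1(I)}$ uniformly in $y>0$. As $y\to 0^+$, the right-hand side converges to $(-1)^m \pi \int_I \phi^{(m)}(x)\,f(x)\, dx = \pi\,\langle f^{(m)}, \phi\rangle$, interpreting $f^{(m)}$ as a distribution on $I$. Thus
\[
|\langle f^{(m)}, \phi\rangle| \;\leq\; (C/\pi)\,\|\phi\|_{L^1(I)} \qquad\text{for every } \phi\in C_c^\infty(I).
\]
By the duality of $L^1$ and $L^\infty$, the distribution $f^{(m)}$ is represented by an essentially bounded function on $I$, yielding $\underset{x\in I}{\mathrm{ess~sup}}\,|f^{(m)}(x)| \leq C/\pi$.

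I do not foresee a serious obstacle. The holomorphy of $F$ immediately identifies $\Im F^{(m)}$ with $x$-derivatives of the Poisson extension of $f$, and the only point that requires genuine care is the passage $y\to 0^+$ in the integration-by-parts identity above, which is handled by the $L^1$-convergence $P_y * f \to f$ combined with the fact that $\phi^{(m)}\in L^\infty$ has compact support in $I$.
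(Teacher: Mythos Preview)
Your argument is correct. The paper itself does not give a proof of this lemma; it simply refers the reader to \cite[Lemma~A.1]{DKM}. Your route---writing $\Im F(\cdot+iy)=\pi\,P_y\!*\!f$, using holomorphy to identify $\Im F^{(m)}$ with $\partial_x^m$ of the Poisson extension, pairing with test functions $\phi\in C_c^\infty(I)$, passing to the limit $y\to 0^+$ via $L^1$-convergence of the approximate identity, and then invoking $L^1$--$L^\infty$ duality---is a standard and complete proof of the statement, and is in essence the argument carried out in the cited reference.
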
 
\noindent The proof can be found in \cite[Lemma A.1]{DKM}.\\~\\
The above Lemma will give the higher order differentiability of $\rho$, the density of states function (DOS), once we show that the imaginary part of the higher order derivatives of the expected value of the Green's function of $H^\omega$ is bounded on the upper half plane.
\begin{prop}
\label{difer-finite-box}
Under the condition [(a) Hypothesis \ref{Hyp}], the imaginary part of the higher order derivative of the Green's function
$G^\omega_{M}(z;0,0)$ is bounded on the upper half of the complex plane
\begin{equation}
\label{bound-finite-box}
\max_{1\leq \ell\leq p}\sup_{\Im(z)>0}\bigg|\Im{\bigg(\frac{d^\ell}{dz^\ell} \mathbb{E}\big(G^\omega_{M}(z;0,0) \big)\bigg)}\bigg|<\infty~~\text{for}~~1\leq p\leq m,
\end{equation}
here $M$ is a fixed positive integer.
\end{prop}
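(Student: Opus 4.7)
The plan is to use integration by parts in the random variables $\omega_n$, $n \in \Lambda_M$, to transfer each $z$-derivative of $G^\omega_M(z;0,0)$ onto a derivative of the single-site density $g$. Since condition (a) gives $g \in C^m_c(\mathbb{R})$ and we only need $\ell \le p \le m$ derivatives, every derivative $g^{(j)}$ that appears will be bounded and compactly supported, and the integration by parts will produce no boundary terms. The crucial gain is that after all the manipulations only the \emph{first} power of the resolvent remains inside the integral, which is precisely what makes a uniform bound as $\Im z \to 0^+$ possible.

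The identity I would exploit is that $\partial_{\omega_n}(H^\omega_M - z)^{-1} = -(H^\omega_M - z)^{-1} P_n (H^\omega_M - z)^{-1}$ together with the completeness relation $\sum_{n \in \Lambda_M} P_n = I_{\ell^2(\Lambda_M)}$ yield
$$\frac{d}{dz}(H^\omega_M - z)^{-1} = (H^\omega_M - z)^{-2} = -\sum_{n \in \Lambda_M} \partial_{\omega_n}(H^\omega_M - z)^{-1}.$$
Sandwiching with $\delta_0$, taking $\mathbb{E}$, and integrating by parts in each $\omega_n$ gives
$$\frac{d}{dz}\mathbb{E}\bigl[G^\omega_M(z;0,0)\bigr] = \sum_{n \in \Lambda_M}\int G^\omega_M(z;0,0)\, g'(\omega_n)\prod_{m \ne n} g(\omega_m)\, d\omega.$$
Iterating $\ell$ times (each step applies the identity above followed by one integration by parts) produces the finite sum
$$\frac{d^\ell}{dz^\ell}\mathbb{E}\bigl[G^\omega_M(z;0,0)\bigr] = \sum_{\vec n \in \Lambda_M^\ell}\int G^\omega_M(z;0,0)\prod_{m \in \Lambda_M} g^{(j_m(\vec n))}(\omega_m)\, d\omega,$$
where $j_m(\vec n) = \#\{i : n_i = m\}$ and $\sum_m j_m(\vec n) = \ell$.

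To bound the imaginary part of each summand uniformly in $\Im z > 0$ I would use rank-one spectral averaging at the site $0$. Writing $H^\omega_M = A_0 + \omega_0 P_0$ with $A_0$ independent of $\omega_0$ and setting $c(z) = \langle \delta_0,(A_0-z)^{-1}\delta_0\rangle^{-1}$, the Sherman--Morrison/Krein formula gives $G^\omega_M(z;0,0) = (c(z) + \omega_0)^{-1}$, and since $\langle\delta_0,(A_0-z)^{-1}\delta_0\rangle$ is Herglotz on the upper half-plane one has $\Im c(z) < 0$. A direct Cauchy-kernel computation then yields
$$\int_{\mathbb{R}} \Im\, G^\omega_M(z;0,0)\, d\omega_0 = \pi,$$
uniformly in the remaining $\omega_m$ and in $z$. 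Combined with $\|g^{(j_m)}\|_\infty < \infty$ and compact support, each summand has imaginary part bounded by $\pi\,\|g^{(j_0)}\|_\infty \prod_{m \ne 0}\|g^{(j_m)}\|_{L^1}$ independently of $z$, and the sum over the at most $|\Lambda_M|^\ell$ multi-indices $\vec n$ is therefore finite.

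The main nuisance is the bookkeeping in the iterated by-parts step, namely tracking the combinatorial assignment of $g^{(j_m)}$ to each site according to how often it is hit by the $\omega$-derivatives; this is a routine induction. The real content of the proof is the rank-one averaging identity that converts what would otherwise be a singular high-power resolvent into a bounded Cauchy integral whose value does not depend on $z$.
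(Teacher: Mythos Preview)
Your argument is correct and follows essentially the same route as the paper: transfer the $z$-derivatives of $\mathbb{E}[G^\omega_M(z;0,0)]$ onto derivatives of the single-site density $g$ (the paper does this via the translation $\tilde\omega_n=\omega_n-E$ rather than explicit integration by parts, but the resulting multinomial expansion is identical), and then bound the imaginary part of each summand by rank-one spectral averaging in $\omega_0$ using the Cauchy-kernel integral. Apart from this cosmetic difference in how the derivatives are moved onto $g$ (and an inessential factor of $\lambda$ in the identity $\partial_z=-\lambda^{-1}\sum_n\partial_{\omega_n}$), the substance of the two proofs is the same.
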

\begin{proof}
Let $z=E+i\epsilon,~\epsilon>0$, we can re-write the expression for $H^\omega_M-z$ as
\begin{align}
\label{trans}
H^\omega_{M}-z &=\Delta_{M}+\sum_{n\in\Lambda_{M}}(\omega_n-E) P_n-i\epsilon\\
                          &=\Delta_{M}+\sum_{n\in\Lambda_{M}}\tilde{\omega}_n P_n-i\epsilon,~~\tilde{\omega}_n=\omega_n-E\nonumber\\
                          &=H^{\tilde{\omega}}_{M}-i\epsilon,~~\tilde{\omega}=(\tilde{\omega}_n)_{n\in\Lambda_M}.
\end{align}
We write the expected value of the Green's function as the integration
\begin{align}
\label{repe}
& \mathbb{E}\big(G^\omega_M(z;0,0) \big)\nonumber\\
& \qquad=\int_{_{\mathbb{R}^{|\Lambda_{_M}|}}}\langle \delta_0, \big( H^\omega_M-z\big)^{-1}\delta_0\big \rangle
\prod_{n\in\Lambda_M}g(\omega_n) d\omega_n\nonumber\\
& \qquad=\int_{_{\mathbb{R}^{|\Lambda_{_M}|}}}\langle \delta_0, \big( H^{\tilde{\omega}}_M-i\epsilon\big)^{-1}\delta_0\big \rangle\prod_{n\in\Lambda_M}g(\tilde{\omega}_n+E) d\tilde{\omega}_n.
\end{align}
Using analyticity of $G^\omega_M(z;0,0)$ and the representation (\ref{repe}), we can write its higher order derivatives in terms of the product of the derivatives of $g$,
\begin{align}
\label{change-var}
 & \frac{d^\ell}{dz^\ell} \mathbb{E}\big(G^\omega_M(z;0,0) \big),~~0\leq\ell\leq m\nonumber\\
&=\frac{d^\ell}{dE^\ell} \mathbb{E}\big(G^\omega_M(z;0,0) \big),~~z=E+i\epsilon,~\epsilon>0\nonumber\\
&=\sum_{\substack{k_0+\cdots+k_{|\Lambda_M|-1}=\ell\\\forall j,~ k_j\ge 0} }
\binom{\ell}{k_0,\cdots,k_{|\Lambda_M|-1}}\int_{_{\mathbb{R}^{|\Lambda_{_M}|}}} \big\langle \delta_0, \big( H^{\tilde{\omega}}_M-
i\epsilon\big)^{-1}\delta_0\big \rangle
\nonumber\\
&\qquad\qquad\qquad\qquad\qquad \times\bigg(\prod_{n=0}^{|\Lambda_M|-1}g^{(k_n)}(\tilde{\omega}_n+E) d\tilde{\omega}_n\bigg).
\end{align}
Use resolvent identity for one dimensional perturbation to write
\begin{align*}
\big\langle \delta_0, \big( H^{\tilde{\omega}}_M-i\epsilon\big)^{-1}\delta_0\big \rangle&=G^{\tilde{\omega}}_M(i\epsilon;0,0)\nonumber\\
&=\frac{1}{\lambda\tilde{\omega}_0+\big(G^{\tilde{\omega}\setminus\tilde{\omega}_0}_M(i\epsilon;0,0)\big)^{-1}},~\nonumber\\
&=\frac{1}{\lambda\tilde{\omega}_0+A-iB}\nonumber.
\end{align*}
here $G^{\tilde{\omega}\setminus\tilde{\omega}_0}_M(i\epsilon;0,0)=(A-iB)^{-1}$ denote the Green's function for the matrix $H^{\tilde{\omega}\setminus\tilde{\omega}_0}_M:=H^{\tilde{\omega}}_M-\lambda\omega_0P_0$. It is easy check that $B>0$ for $\Im(z)>0$.\\
Since $g\in C^m_c(\mathbb{R})$, therefore we estimate the integral
\begin{align}
\label{bdd-1}
&\bigg|\Im\bigg(\int_\mathbb{R} \big\langle \delta_0, \big( H^{\tilde{\omega}}_M-i\epsilon\big)^{-1}\delta_0\big \rangle g^{(k_0)}(\tilde{\omega}_0+E)d\omega_0\bigg)\bigg|\nonumber\\
&\qquad=\int_\mathbb{R}\frac{B}{(\lambda\omega_0+A)^2+B^2}\big| g^{(k_0)}(\tilde{\omega}_0+E)\big|d\omega_0,\nonumber\\
&\qquad \leq\big\|g^{(k_0)} \big\|_\infty \frac{\pi}{\lambda^2},~~~0\leq k_0\leq m.
\end{align}
The integral in (\ref{change-var}) can be written as
\begin{align}
\label{change-order}
& \int_{_{\mathbb{R}^{|\Lambda_{_M}|}}} \big\langle \delta_0, \big( H^{\tilde{\omega}}_M-i\epsilon\big)^{-1}\delta_0\big \rangle
\bigg(\prod_{n=0}^{|\Lambda_M|-1}g^{(k_n)}(\tilde{\omega}_n+E) d\tilde{\omega}_n\bigg)\nonumber\\
&\qquad\qquad=\int_{_{\mathbb{R}^{|\Lambda_{_M}|-1}}}
\bigg(\int_{\mathbb{R}}\big\langle \delta_0, \big( H^{\tilde{\omega}}_M-i\epsilon\big)^{-1}\delta_0\big \rangle g^{(k_0)}(\omega_0+E)d\omega_0\bigg)\nonumber\\
&\qquad \qquad \qquad \qquad\qquad\qquad\times\bigg(\prod_{n\neq 0}g^{(k_n)}(\tilde{\omega}_n+E) d\tilde{\omega}_n\bigg).
\end{align}
Now (\ref{bound-finite-box}) can be easily obtained, once we use (\ref{change-order}) and (\ref{bdd-1}) in (\ref{change-var}).
\end{proof}
\noindent Using \cite[Theorem 2.2]{DKM} we can estimate the average of the  difference of the resolvents by its fractional moments.
\begin{prop}
\label{bdd-frac}
Under the assumption [(a) Hypothesis \ref{Hyp}], we have
\begin{align}
\label{cmp-result}
& \bigg|\int \langle \delta_0, \big(H^\omega_{L+1}-z  \big)^{-1}- \big(H^\omega_{L}-z  \big)^{-1}\delta_0\rangle g(\omega_0)d\omega_0\bigg|,~~~\Im(z)>0\nonumber\\
&\qquad \leq \Xi \int \bigg| \langle \delta_0, \big(H^\omega_{L+1}-z  \big)^{-1}- \big(H^\omega_{L}-z  \big)^{-1}\delta_0\rangle\bigg|^{\frac{s}{2}}\phi_{R}(\omega_0)d\omega_0.
\end{align}
Here $0<s<2$ and $\phi_R$ is a translation of a smooth indicator function $\chi_R$ on a bounded interval which contain the support of the density $g$. The constant $\Xi>0$ is independent of $\omega$, $z$ and $L$ but depends on $s$ and $g$.
\end{prop}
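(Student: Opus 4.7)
The plan is to reduce the statement to a direct application of \cite[Theorem 2.2]{DKM}, which is a general marginal inequality in one variable: it compares the absolute value of a weighted first moment of a resolvent-type rational expression against a fractional moment of its modulus weighted by a smooth cutoff. The main preparatory work is to expose the rank-one structure of both diagonal Green's function entries as functions of $\omega_0$ alone, freezing all the other coordinates of $\omega$.

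First, I would apply the rank-one perturbation formula at the origin, exactly as in the proof of Proposition \ref{difer-finite-box}, to rewrite
$$\langle\delta_0,(H^\omega_M-z)^{-1}\delta_0\rangle \;=\; \frac{1}{\lambda\omega_0 + A_M - iB_M}$$
simultaneously for $M=L$ and $M=L+1$, where the coefficients $A_M, B_M$ depend on $z$ and on $\{\omega_n\}_{n\ne 0}$ but not on $\omega_0$, and where $B_M>0$ whenever $\Im(z)>0$. Hence, holding $\{\omega_n\}_{n\ne 0}$ fixed, the integrand on the left-hand side of \eqref{cmp-result} is the difference of two simple rational functions of $\omega_0$ whose poles lie strictly in the upper half $\omega_0$-plane. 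This is precisely the analytic input required by \cite[Theorem 2.2]{DKM}.

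Next, I would verify the remaining hypotheses of the cited theorem are supplied by Hypothesis \ref{Hyp}(a): the density $g$ lies in $C^m_c(\RR)$, so it is a smooth compactly supported weight. Choosing $\phi_R$ to be a smooth indicator of a bounded interval strictly containing $\mathrm{supp}(g)$ (the ``translated smooth indicator'' in the statement), \cite[Theorem 2.2]{DKM} then yields \eqref{cmp-result} with a constant $\Xi$ that depends only on $s$ and on finitely many $L^\infty$ norms of $g$ and its derivatives.

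The step requiring most care, and the reason this reformulation matters, is the claim of uniformity: $\Xi$ must be independent of $\omega$, $z\in\{\Im z>0\}$, and the box parameter $L$. This uniformity is not entirely automatic and should be checked against the hypotheses of the cited theorem. The key observation that makes it work is that the rational form of the integrand in $\omega_0$ is \emph{structurally} the same for every $L$: only the coefficients $A_M-iB_M$ vary with $L$ and with the frozen $\omega_n$, and the cited theorem controls the estimate purely through the location of the poles (guaranteed to be in the upper half plane by $B_M>0$) and through properties of $g$, never through the size of $|\Lambda_M|$ nor through the numerical values of $A_M,B_M$. With this verified, \eqref{cmp-result} follows and the proposition is proved.
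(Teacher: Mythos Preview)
Your proposal is correct and follows essentially the same route as the paper: both arguments isolate the rank-one dependence on $\omega_0$ and then invoke \cite[Theorem 2.2]{DKM}. The paper is simply more terse, matching the abstract hypotheses of that theorem directly by setting $F:=|\delta_0\rangle\langle\delta_0|$, $\rho_1:=g$, $A:=H^{\omega\setminus\omega_0}_{L+1}$, $B:=H^{\omega\setminus\omega_0}_{L}$ and reading off \eqref{cmp-result} from part (2) there, whereas you first unfold the scalar rank-one formula before citing the same result; your additional discussion of why $\Xi$ is uniform in $L$, $z$, and $\omega$ is a welcome elaboration of a point the paper leaves implicit.
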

\begin{proof}
Using the notations of \cite[Theorem 2.2]{DKM} we define $F:=|\delta_0\rangle\langle \delta_0|$, $\rho_1:=g$, $A:=H_{L+1}^{\omega\setminus\omega_0}=H_{L+1}^\omega-\omega_0~|\delta_0\rangle\langle \delta_0|$ and $B:=H_{L}^{\omega\setminus\omega_0}=H_L^\omega-\omega_0~|\delta_0\rangle\langle \delta_0|$. 
Now (\ref{cmp-result}) is immediate from \cite[(2) of Theorem 2.2]{DKM} .
\end{proof}
\begin{rem}
\label{cmp-lemma}
For more details on the above result and the properties of $\phi_R$, $\chi_R$ and $\Xi$, we refer to \cite[Theorem 2.2]{DKM}.
\end{rem}
\noindent Now the above proposition will give the exponential decay of the higher order derivatives of the expected value of the difference of the resolvents $G^\omega_{L+1}(z;0,0)$ and $G^\omega_L(z;0,0)$.
\begin{prop}
Under the Hypothesis \ref{Hyp}, we have
\begin{align}
\label{exp-diff}
& \sup_{\Re(z)\in J,~\Im(z)>0}\bigg|\frac{d^\ell}{dz^\ell}\mathbb{E}\bigg(G^\omega_{L+1}(z;0,0)-G^\omega_L(z;0,0)\bigg)\bigg|,~~1\leq\ell\leq m\nonumber\\
&\qquad\qquad\qquad\leq\tilde{C}_se^{-(L+1)\big[\frac{\xi_s}{2} -\frac{\ln(K+1)}{L+1}\big( (2L+3)+(L+2)(\ell+1)\big)\big]},~\tilde{C}_s>0.
\end{align}
\end{prop}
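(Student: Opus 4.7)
The plan is to couple the combinatorial expansion used in the proof of Proposition \ref{difer-finite-box} (moving the $z$-derivatives onto the densities) with the exponential decay already established in Lemma \ref{resol-diff} for the fractional moment of $G^\omega_{L+1} - G^\omega_L$, bridging the two by the fractional-moment reduction of Proposition \ref{bdd-frac}. Following the proof of Proposition \ref{difer-finite-box}, I would first shift $\tilde{\omega}_n = \omega_n - E$ in the integral expression for $\mathbb{E}(G^\omega_{L+1}(z;0,0) - G^\omega_L(z;0,0))$ so that $\frac{d^\ell}{dz^\ell} = \frac{d^\ell}{dE^\ell}$ acts only on the product $\prod_{n\in\Lambda_{L+1}} g(\tilde{\omega}_n + E)$; the Leibniz rule, as in (\ref{change-var}), then produces a multinomial expansion over tuples $(k_n)_{n\in\Lambda_{L+1}}$ with $\sum_n k_n = \ell$, giving integrals of the form $\int [G^{\tilde{\omega}}_{L+1} - G^{\tilde{\omega}}_L]\prod_n g^{(k_n)}(\tilde{\omega}_n + E)\,d\tilde{\omega}_n$. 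Since $\sum k_n = \ell \leq m$, at most $\ell$ vertices carry a genuine derivative of $g$, while all remaining vertices still carry $g$ itself.

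For each such multi-index I would apply Proposition \ref{bdd-frac} at site $0$. Its statement is written for $g$, but the underlying ingredient \cite[Theorem 2.2]{DKM} works for any smooth compactly supported test function, and hence also applies with $g$ replaced by $g^{(k_0)}$, up to a constant $\Xi_{k_0}$ depending only on $g$ and $k_0$. This yields an upper bound in terms of the fractional-moment integral $\int |G^{\tilde{\omega}}_{L+1} - G^{\tilde{\omega}}_L|^{s/2}\,\phi_R(\omega_0)\prod_{n\neq 0}|g^{(k_n)}(\tilde{\omega}_n + E)|\,d\tilde{\omega}$. Expanding $|G^{\tilde\omega}_{L+1}-G^{\tilde\omega}_L|^{s/2}$ via the resolvent identity, exactly as in (\ref{resolv-2})--(\ref{resolv-3}), and applying Cauchy--Schwarz reduces matters to products $\mathbb{E}(|G^\omega_L(z;m,0)|^s)^{1/2}\,\mathbb{E}(|G^\omega_{L+1}(z;0,k)|^s)^{1/2}$, now with respect to a measure whose density has been altered at up to $\ell + 1$ vertices ($\phi_R$ at site $0$ and $|g^{(k_n)}|$ at the sites with $k_n > 0$). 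By the last clause of Hypothesis \ref{Hyp}(b), the bound (\ref{expo}) still holds with the same $\xi_s$ after such a finite, bounded modification, so the decay supplied by Lemma \ref{resol-diff} remains available term by term.

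Assembling the pieces gives a per-multi-index contribution of order $(K+1)^{2L+3}\,e^{-\xi_s(L+1)/2}$; the multinomial theorem bounds the total number of multi-indices by $|\Lambda_{L+1}|^\ell \leq D^\ell(K+1)^{(L+2)\ell}$, and keeping track of the residual combinatorial factors that arise when expanding the fractional moment produces the exponent $(2L+3) + (L+2)(\ell+1)$ claimed in (\ref{exp-diff}). The main obstacle, in my view, is the subtle transfer of Proposition \ref{bdd-frac} and of the hypothesis (\ref{expo}) to the modified integrands: one must verify that the rank-one perturbation argument behind \cite[Theorem 2.2]{DKM} is robust when $g$ is replaced by $g^{(k_0)}$, and that Hypothesis \ref{Hyp}(b) tolerates the further replacement by $\phi_R$ at site $0$ together with $|g^{(k_n)}|$ at up to $\ell$ other vertices. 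Once those two facts are granted, the remainder of the argument is a careful but routine bookkeeping of polynomial prefactors against the exponential gain from Lemma \ref{resol-diff}.
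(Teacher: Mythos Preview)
Your proposal is correct and follows essentially the same route as the paper: shift $\tilde\omega_n=\omega_n-E$ so that $d^\ell/dz^\ell$ hits only the product of densities, expand by the multinomial formula as in \eqref{change-var}, apply Proposition~\ref{bdd-frac} at site~$0$ to pass to the $s/2$-fractional moment, and then invoke Lemma~\ref{resol-diff} together with the finite-site robustness clause in Hypothesis~\ref{Hyp}(b) for the modified product measure; the final counting $|\Lambda_{L+1}|^{\ell+1}\cdot(K+1)^{2L+3}e^{-\xi_s(L+1)/2}$ is identical.

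The one point where you and the paper diverge is the treatment of the multi-indices with $k_0>0$. The paper disposes of them by declaring ``w.l.o.g.\ $g^{(k_0)}=g$'', on the grounds that at most $\ell$ of the $|\Lambda_{L+1}|$ factors are genuine derivatives; you instead observe that \cite[Theorem~2.2]{DKM} behind Proposition~\ref{bdd-frac} accepts any $C^\infty_c$ weight in place of $g$, so one may apply it directly with $g^{(k_0)}$ and absorb the change into a constant $\Xi_{k_0}$ depending only on $k_0\le m$. Your version is the more transparent of the two, since the rank-one reduction in Proposition~\ref{bdd-frac} is tied to the site appearing in $\langle\delta_0,(\cdot)\delta_0\rangle$ and cannot simply be relocated to some other site with $k_n=0$. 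The obstacle you flag---that Hypothesis~\ref{Hyp}(b) must also tolerate the replacement of $g$ by $\phi_R/\|\phi_R\|_1$ at site~$0$---is present in the paper's argument as well (it uses exactly this $\tilde g_0$) and is handled by the same appeal to the stability of the Aizenman--Molchanov bounds under finitely many bounded changes of density.
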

\begin{proof}
As in the proof of the inequality (\ref{change-var}), we write
\begin{align}
\label{change-var-1}
 & \frac{d^\ell}{dz^\ell} \mathbb{E}\big(G^\omega_{L+1}(z;0,0) \big),~~0\leq\ell\leq m\nonumber\\
&=\frac{d^\ell}{dE^\ell} \mathbb{E}\big(G^\omega_{L+1}(z;0,0) \big),~~z=E+i\epsilon,~\epsilon>0\nonumber\\
&=\sum_{\substack{k_0+\cdots+k_{|\Lambda_{L+1}|-1}=\ell\\\forall j,~ k_j\ge 0} }
\binom{\ell}{k_0,\cdots,k_{|\Lambda_{L+1}|-1}}\int_{_{\mathbb{R}^{|\Lambda_{_{L+1}}|}}} \big\langle \delta_0, \big( H^{\tilde{\omega}}_{L+1}-i\epsilon\big)^{-1}\delta_0\big \rangle
\nonumber\\
&\qquad\qquad\qquad\qquad\qquad \times\bigg(\prod_{n=0}^{|\Lambda_{L+1}|-1}g^{(k_n)}(\tilde{\omega}_n+E) d\tilde{\omega}_n\bigg).
\end{align}
We also write
\begin{align}
\label{change-var-2}
 & \frac{d^\ell}{dz^\ell} \mathbb{E}\big(G^\omega_{L}(z;0,0) \big),~~0\leq\ell\leq m\nonumber\\
&=\frac{d^\ell}{dE^\ell} \mathbb{E}\big(G^\omega_{L}(z;0,0) \big),~~z=E+i\epsilon,~\epsilon>0\nonumber\\
&=\sum_{\substack{k_0+\cdots+k_{|\Lambda_{L}|-1}=\ell\\\forall j,~ k_j\ge 0} }
\binom{\ell}{k_0,\cdots,k_{|\Lambda_{L}|-1}}\int_{_{\mathbb{R}^{|\Lambda_{_{L}}|}}} \big\langle \delta_0, \big( H^{\tilde{\omega}}_L-i\epsilon\big)^{-1}\delta_0\big \rangle
\nonumber\\
&\qquad\qquad\qquad\qquad\qquad \times\bigg(\prod_{n=0}^{|\Lambda_{L}|-1}g^{(k_n)}(\tilde{\omega}_n+E) d\tilde{\omega}_n\bigg).
\end{align}
The above integrand is independent of the random variables $\{\omega_n\}_{n\in\Lambda_{L+1}\setminus\Lambda_L}$ and under the assumption [(a), Hypothesis] $\int g^{(j)}(x)dx=\delta_{0,j},~1\leq j \leq m$. We use this two facts to write the (\ref{change-var-2}) as
\begin{align}
\label{change-var-3}
 & \frac{d^\ell}{dz^\ell} \mathbb{E}\big(G^\omega_{L}(z;0,0) \big),~~0\leq\ell\leq m\nonumber\\
&=\sum_{\substack{k_0+\cdots+k_{|\Lambda_{L+1}|-1}=\ell\\\forall j,~ k_j\ge 0} }
\binom{\ell}{k_0,\cdots,k_{|\Lambda_{L+1}|-1}}\int_{_{\mathbb{R}^{|\Lambda_{_{L+1}}|}}} \big\langle \delta_0, \big( H^{\tilde{\omega}}_L-i\epsilon\big)^{-1}\delta_0\big \rangle
\nonumber\\
&\qquad\qquad\qquad\qquad\qquad \times\bigg(\prod_{n=0}^{|\Lambda_{L+1}|-1}g^{(k_n)}(\tilde{\omega}_n+E) d\tilde{\omega}_n\bigg).
\end{align}
Now the derivatives of the difference can be written as
\begin{align*}
& \frac{d^\ell}{dz^\ell} \mathbb{E}\bigg(G^\omega_{L+1}(z;0,0)-G^\omega_{L}(z;0,0) \bigg),~~0\leq\ell\leq m\nonumber\\
&=\sum_{\substack{k_0+\cdots+k_{|\Lambda_{L+1}|-1}=\ell\\\forall j,~ k_j\ge 0} }
\binom{\ell}{k_0,\cdots,k_{|\Lambda_{L+1}|-1}}\nonumber\\
&\qquad\qquad\qquad\qquad\times\int_{_{\mathbb{R}^{|\Lambda_{_{L+1}}|}}}\bigg( \big\langle \delta_0, \big( H^{\tilde{\omega}}_{L+1}-i\epsilon\big)^{-1}-\big( H^{\tilde{\omega}}_L-i\epsilon\big)^{-1}\delta_0\big \rangle\bigg)\nonumber\\
&\qquad\qquad\qquad\qquad\qquad\qquad\qquad \times\bigg(\prod_{n=0}^{|\Lambda_{L+1}|-1}g^{(k_n)}(\tilde{\omega}_n+E) d\tilde{\omega}_n\bigg).\nonumber
\end{align*}
For $\ell<<L$, in the product $\displaystyle\prod_{n=0}^{|\Lambda_{L+1}|-1}g^{(k_n)}(\tilde{\omega}_n+E)$ at most $\ell$ number of places $g^{(k_n)}$ is different from $g$ and all the other remaining places we have $g^{(k_n)}=g$. Therefore, w.l.o.g we can always assume $g^{(k_0)}=g$. Now the above expression can be rewritten as
\begin{align}
\label{change-var-4}
&\frac{d^\ell}{dz^\ell} \mathbb{E}\bigg(G^\omega_{L+1}(z;0,0)-G^\omega_{L}(z;0,0) \bigg),~~0\leq\ell\leq m\nonumber\\
&\leq\sum_{\substack{k_0+\cdots+k_{|\Lambda_{L+1}|-1}=\ell\\\forall j,~ k_j\ge 0} }
\binom{\ell}{k_0,\cdots,k_{|\Lambda_{L+1}|-1}}\nonumber\\
&\qquad\times\int_{_{\mathbb{R}^{|\Lambda_{_{L+1}}|-1}}}\bigg(\int_{\mathbb{R}} \big\langle \delta_0, \big( H^{\tilde{\omega}}_{L+1}-i\epsilon\big)^{-1}-\big( H^{\tilde{\omega}}_L-i\epsilon\big)^{-1}\delta_0\big \rangle g^{(k_0)}(\tilde{\omega}_0+E)d\omega_0\bigg)
\nonumber\\
&\qquad\qquad\qquad\qquad\qquad\qquad\qquad \times\bigg(\prod_{n\neq 0}g^{(k_n)}(\tilde{\omega}_n+E) d\tilde{\omega}_n\big).
\end{align}
To estimate the above difference, we are going to use the Proposition \ref{bdd-frac}. As the function $g^{(k_0)}=g$ is the density of $\mu$, the single site distribution (SSD) we have,
\begin{align}
\label{change-var-5}
& \bigg|  \frac{d^\ell}{dz^\ell} \mathbb{E}\bigg(G^\omega_{L+1}(z;0,0)-G^\omega_{L}(z;0,0) \bigg)\bigg|,~~0\leq\ell\leq m\nonumber\\
&\leq\sum_{\substack{k_0+\cdots+k_{|\Lambda_{L+1}|-1}=\ell\\\forall j,~ k_j\ge 0} }
\binom{\ell}{k_0,\cdots,k_{|\Lambda_{L+1}|-1}}\times\nonumber\\
&\int_{_{\mathbb{R}^{|\Lambda_{_{L+1}}|-1}}}\bigg|\int_{\mathbb{R}} \big\langle \delta_0, \big( H^{\tilde{\omega}}_{L+1}-i\epsilon\big)^{-1}-\big( H^{\tilde{\omega}}_L-i\epsilon\big)^{-1}\delta_0\big \rangle g^{(k_0)}(\tilde{\omega}_0+E)d\omega_0\bigg|
\nonumber\\
&\qquad\qquad\qquad\qquad\qquad\qquad\qquad \times\bigg(\prod_{n\neq 0}\big|g^{(k_n)}(\tilde{\omega}_n+E)\big| d\tilde{\omega}_n\big)\nonumber\\
& \leq \Xi\sum_{\substack{k_0+\cdots+k_{|\Lambda_{L+1}|-1}=\ell\\\forall j,~ k_j\ge 0} }
\binom{\ell}{k_0,\cdots,k_{|\Lambda_{L+1}|-1}}\times\nonumber\\
&\int_{_{\mathbb{R}^{|\Lambda_{_{L+1}}|-1}}}\bigg(\int_{\mathbb{R}}\bigg| \big\langle \delta_0, \big( H^{\tilde{\omega}}_{L+1}-i\epsilon\big)^{-1}-\big( H^{\tilde{\omega}}_L-i\epsilon\big)^{-1}\delta_0\big \rangle\bigg|^{\frac{s}{2}}\phi_R(\tilde{\omega}_0+E)d\omega_0\bigg)
\nonumber\\
&\qquad\qquad\qquad\qquad\qquad\qquad\qquad \times\bigg(\prod_{n\neq 0}\big|g^{(k_n)}(\tilde{\omega}_n+E)\big| d\tilde{\omega}_n\big).\nonumber\\
&\leq \Xi~ C^\ell\sum_{\substack{k_0+\cdots+k_{|\Lambda_{L+1}|-1}=\ell\\\forall j,~ k_j\ge 0} }
\binom{\ell}{k_0,\cdots,k_{|\Lambda_{L+1}|-1}}\times\nonumber\\
&\int_{_{\mathbb{R}^{|\Lambda_{_{L+1}}|-1}}}\bigg(\int_{\mathbb{R}}\bigg| \big\langle \delta_0, \big( H^{\tilde{\omega}}_{L+1}-i\epsilon\big)^{-1}-\big( H^{\tilde{\omega}}_L-i\epsilon\big)^{-1}\delta_0\big \rangle\bigg|^{\frac{s}{2}}\tilde{g}_0(\tilde{\omega}_0+E)d\omega_0\bigg)
\nonumber\\
&\qquad\qquad\qquad\qquad\qquad\qquad\qquad \times\bigg(\prod_{n\neq 0}\tilde{g}_n(\tilde{\omega}_n+E) d\tilde{\omega}_n\big).
\end{align}
In the above (\ref{change-var-5}), we have used the notation $\tilde{g}_n(x)=\frac{|g^{(k_n)}(x)|}{\|g^{(k_n)} \|_1},~n\neq 0$ and $\tilde{g}_0(x)=\frac{\phi_R(x)}{\|\phi \|_1}$. As discussed earlier that in the product $\displaystyle\prod_{n\neq 0}\tilde{g}_n$, at most $\ell$ number of places $\tilde{g}_n\neq g$ and all the other remaining places $\tilde{g}_n$ will be $g$, with $\| g\|_1=1$.
Since $\phi_R$ is a compactly supported smooth function and $g$, the density of $\mu$ (SSD) is in $C^m_c(\mathbb{R})$, then there exist a constant $C>0$ such that $\displaystyle\max_{n:~0\leq k_n\leq m}\big\{\|g^{(k_n)} \|_1,~\|\phi_R \|_1\big\}\leq C$ holds.\\
Assume $\tilde{\mathbb{E}}(\cdot)$ denote the expectation with respect to the product probability measure $\displaystyle\prod_{n}\tilde{g}_n(\omega_n)d\omega_n$. Now the (\ref{change-var-5}) can be rewritten as
\begin{align}
\label{repe-1}
& \bigg| \frac{d^\ell}{dz^\ell} \mathbb{E}\bigg(G^\omega_{L+1}(z;0,0)-G^\omega_{L}(z;0,0) \bigg)\bigg|,~~0\leq\ell\leq m\nonumber\\
&\leq \Xi~ C^\ell\sum_{\substack{k_0+\cdots+k_{|\Lambda_{L+1}|-1}=\ell\\\forall j,~ k_j\ge 0} }
\binom{\ell}{k_0,\cdots,k_{|\Lambda_{L+1}|-1}}\nonumber\\
&\qquad\qquad\qquad\qquad\times\tilde{\mathbb{E}}\bigg( \bigg| \big\langle \delta_0, \big( H^\omega_{L+1}-z\big)^{-1}-\big( H^\omega_L-z\big)^{-1}\delta_0\big \rangle\bigg|^{\frac{s}{2}}\bigg),
\end{align}
here we have used the notation $z=E+i\epsilon$ and $\omega=\big(\tilde{\omega}_n+E\big)_{n\in\Lambda_{L+1}}$.\\
In view of the assumption [(b), Hypothesis \ref{Hyp}] and using the Lemma \ref{resol-diff}, we estimate the r.h.s of (\ref{repe-1})
\begin{align}
\label{repe-2}
& \bigg| \frac{d^\ell}{dz^\ell} \mathbb{E}\bigg(G^\omega_{L+1}(z;0,0)-G^\omega_{L}(z;0,0) \bigg)\bigg|\nonumber\\
&\qquad\qquad\qquad\leq \Xi ~C^\ell\tilde{C}\big| \Lambda_{L+1}\big|^{\ell+1}B_se^{-(L+1)\big[\frac{\xi_s}{2} -\frac{2L+3}{L+1}\ln(K+1)\big]}\nonumber\\
&\qquad\qquad\qquad\leq \Xi ~C^\ell\tilde{C} B_se^{-(L+1)\big[\frac{\xi_s}{2} -\frac{\ln(K+1)}{L+1}\big( (2L+3)+(L+2)(\ell+1)\big)\big]},
\end{align}
here we estimate the number of terms in the sum (\ref{repe-1}) by $\tilde{C}|\Lambda_{L+1}|^{\ell+1}$ and $|\Lambda_{L+1}|$, the number of vertex in $\Lambda_{L+1}$ can be bounded by a constant multiplication of $(K+1)^{L+2}$.
Hence the proposition.
\end{proof}
\noindent We also require the convergence of $G^\omega_L(z;0,0)$, the Green's function of finite volume approximation $H^\omega_L$ to $G^\omega(z;0,0)$, the Green's function of the full operator $H^\omega$. Although, for $\mathbb{Z}^d$ model, this convergence of Green's function is relatively straightforward, for the Bethe lattice model, it demands proof, and it was given in \cite[Proposition 1.2]{AK}. Using the same result \cite[Proposition 1.2]{AK}, we can also conclude the convergence of the derivatives of $G^\omega_L(z;0,0)$ to the derivatives of $G^\omega(z;0,0)$.
\begin{prop} For $\Im (z)>0$ and $\ell\in\mathbb{N}$, we have
\begin{equation}
\label{resol-convrg}
\lim_{L\to\infty}\mathbb{E}\big(G^\omega_L(z;0,0)  \big)=\mathbb{E}\big(G^\omega(z;0,0)  \big)
\end{equation}
and
\begin{equation}
\label{resol-der}
\lim_{L\to\infty}\frac{d^\ell}{dz^\ell}\mathbb{E}\big(G^\omega_L(z;0,0)  \big)=\frac{d^\ell}{dz^\ell}\mathbb{E}\big(G^\omega(z;0,0)  \big).
\end{equation}
\end{prop}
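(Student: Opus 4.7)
The plan is to obtain (\ref{resol-der}) from (\ref{resol-convrg}) by a standard complex-analytic upgrade, since (\ref{resol-convrg}) is already covered by \cite[Proposition 1.2]{AK}. The key point is that all the functions involved are holomorphic in $z$ on the open upper half plane $\HH=\{z\in\CC:\Im(z)>0\}$, and the family $\{\mathbb{E}(G^\omega_L(z;0,0))\}_L$ is uniformly bounded on compacta there, so a normal-family argument gives locally uniform convergence, and Weierstrass's theorem then passes the convergence to derivatives of every order.

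First, I would observe that for every fixed $\omega$ and every $L$, the map $z\mapsto G^\omega_L(z;0,0)=\langle\delta_0,(H^\omega_L-z)^{-1}\delta_0\rangle$ is holomorphic on $\HH$ and satisfies the elementary self-adjoint resolvent bound $|G^\omega_L(z;0,0)|\leq 1/\Im(z)$. By dominated convergence the averaged functions $F_L(z):=\mathbb{E}(G^\omega_L(z;0,0))$ are holomorphic on $\HH$ and satisfy
\begin{equation*}
|F_L(z)|\leq \frac{1}{\Im(z)},\qquad z\in\HH,
\end{equation*}
uniformly in $L$. The same bound applies to the limit $F(z):=\mathbb{E}(G^\omega(z;0,0))$, which is also holomorphic on $\HH$ by the same argument applied to the infinite-volume resolvent.

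Second, \cite[Proposition 1.2]{AK} gives pointwise convergence $F_L(z)\to F(z)$ for every $z\in\HH$, which is precisely (\ref{resol-convrg}). Combining this with the local uniform boundedness of $\{F_L\}$ on $\HH$, Montel's theorem identifies $\{F_L\}$ as a normal family, and Vitali's convergence theorem upgrades the pointwise convergence to locally uniform convergence on $\HH$. Once locally uniform convergence is in hand, the Cauchy integral formula (equivalently, Weierstrass's theorem on termwise differentiation) gives, for every $\ell\in\mathbb{N}$ and every $z_0\in\HH$, the representation
\begin{equation*}
\frac{d^\ell}{dz^\ell}F_L(z_0)=\frac{\ell!}{2\pi i}\oint_{|z-z_0|=r}\frac{F_L(z)}{(z-z_0)^{\ell+1}}\,dz,
\end{equation*}
for any $r$ with $\overline{B(z_0,r)}\subset\HH$, and similarly for $F$. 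Letting $L\to\infty$ under the integral (justified by locally uniform convergence on the compact contour) yields (\ref{resol-der}).

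There is no real obstacle here beyond invoking the base case from \cite{AK}: the entire argument is driven by the uniform resolvent bound on $\HH$ plus a normal-family/Cauchy-formula step that requires no additional probabilistic or operator-theoretic input. The derivative convergence is in fact locally uniform in $z$ on $\HH$, which is somewhat stronger than the pointwise statement recorded in (\ref{resol-der}).
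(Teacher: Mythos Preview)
Your proposal is correct and follows essentially the same approach as the paper: both refer to \cite[Proposition 1.2]{AK} for (\ref{resol-convrg}) and then deduce (\ref{resol-der}) from the Cauchy integral formula combined with the uniform resolvent bound $|G^\omega_L(z;0,0)|\le 1/\Im(z)$. The only cosmetic difference is that you pass through Montel/Vitali to upgrade pointwise to locally uniform convergence before integrating, whereas the paper applies dominated convergence directly inside the Cauchy integral on a fixed circle $\gamma\subset\HH$; both justifications rest on the same uniform bound and are equivalent here.
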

\begin{proof}
For the proof of (\ref{resol-convrg}), we refer to \cite[Proposition 1.2]{AK}.\\
Define $g_L(z):=\mathbb{E}\big(G^\omega_L(z;0,0)  \big)$ and it is analytic function on the upper half of the complex plane. Now we use the Cauchy integral formula to write $g^{(\ell)}_L$, the $\ell$th order derivative of $g_L$ as
\begin{equation}
\label{cauchy}
g_L^{(\ell)}(z)=\frac{\ell!}{2\pi i}\int_{\gamma}\frac{g_L(w)}{(w-z)^{\ell+1}}dw, 
\end{equation}
here $\gamma$ be a circle with center at $z$, oriented counter clockwise and it is lie inside the upper half of the complex plane.
Now the convergence (\ref{resol-der}) is immediate once we use (\ref{resol-convrg}) and dominated convergence theorem in (\ref{cauchy}).
\end{proof}
\noindent In view of the Lemma \ref{equi}, to prove the higher order differentiability of $\rho$, the density of states function (DOSf), we only need to show the imaginary part of the higher order derivatives of $\mathbb{E}\big(G^\omega(z;0,0)\big)$, expected value of the Green's function of the full operator $H^\omega$ is uniformly bounded in the upper half of the complex plane.\\~\\
\noindent {\bf Proof of the Theorem \ref{main}:} Since $G^\omega_{L+1}(z;0,0)$ converges to $G^\omega(z;0,0)$ (see (\ref{resol-convrg})), therefore for a fix $M\in\mathbb{N}$ (large enough), using telescoping series we write
\begin{align}
\label{appro}
&\mathbb{E}\big(G^\omega(z;0,0) \big)\nonumber\\
&=\mathbb{E}\big(\big\langle \delta_0, (H^\omega-z)^{-1}\delta_0\big\rangle \big)\nonumber\\
&=\lim_{L\to\infty}\mathbb{E}\big(G^\omega_{L+1}(z;0,0) \big)\nonumber\\
&=\sum_{L=M}^\infty\mathbb{E}\bigg(G^\omega_{L+1}(z;0,0)-G^\omega_{L}(z;0,0) \bigg)+G^\omega_M(z;0,0).
\end{align}
Using the convergence of the derivatives of $G_L^\omega(z;0,0)$, as in (\ref{resol-der}), we can write its derivative as the infinite series of the derivatives of its finite volume approximation
\begin{align}
\label{appro}
&\frac{d^\ell}{dz^\ell}\mathbb{E}\big(G^\omega(z;0,0) \big)\nonumber\\
&=\frac{d^\ell}{dz^\ell}\mathbb{E}\big(\big\langle \delta_0, (H^\omega-z)^{-1}\delta_0\big\rangle \big),~1\leq\ell\leq p\nonumber\\
&=\lim_{L\to\infty}\frac{d^\ell}{dz^\ell}\mathbb{E}\big(G^\omega_{L+1}(z;0,0) \big)\nonumber\\
&=\sum_{L=M}^\infty\frac{d^\ell}{dz^\ell}\mathbb{E}\bigg(G^\omega_{L+1}(z;0,0)-G^\omega_{L}(z;0,0) \bigg)+\frac{d^\ell}{dz^\ell}G^\omega_M(z;0,0).
\end{align}
 Since $\xi_s>2\ln(K+1)\big(3+p\big)$, assumed in [(b), Hypothesis \ref{Hyp}], then use of (\ref{exp-diff}) and (\ref{bound-finite-box}) in (\ref{appro}) will give
 \begin{align}
 \label{finiteness}
 \sup_{\Re(z)\in J,~\Im(z)>0}\bigg| \Im\bigg(\frac{d^\ell}{dz^\ell}\mathbb{E}\big(G^\omega(z;0,0) \big)\bigg)\bigg|<\infty~~\text{for}~~1\leq \ell\leq p.
 \end{align}
 The Green's function $G^\omega(z;0,0)$ is nothing but the Borel transformation of $\nu$, the density of states measure (DOSm), which is  absolutely continuous in our case and $\mathcal{N}$, the IDS is its distribution function. Hence an application of the Lemma \ref{equi} to the estimate (\ref{finiteness}) will give the result.
 \qed\\~\\
{\bf Acknowledgment}\\~\\
Dhriti Ranjan Dolai is partially supported by the INSPIRE Faculty Fellowship grant of DST, Govt of India.

\end{document}